\numberwithin{equation}{section}
\newtheorem{thm}{Theorem}[section]
\newtheorem{lemma}[thm]{Lemma}
\newtheorem{cor}[thm]{Corollary}
\newtheorem{prop}[thm]{Proposition}
\theoremstyle{definition}
\newtheorem{defn}[thm]{Definition}
\begin{document}
\title[Smirnov classes]{The Smirnov classes for the Fock space and complete Pick spaces}

\author{Michael T. Jury}
\address{University of Florida}
\email{mjury@ufl.edu}

\author{Robert T.W. Martin}
\address{University of Cape Town}
\email{rtwmartin@gmail.com}

\thanks{Second author acknowledges support of NRF CPRR Grant 105837.}
\date{\today}

\begin{abstract}
For a Hilbert function space $\mathcal H$ the Smirnov class $\mathcal N^+(\mathcal H)$ is defined to be the set of functions expressible as a ratio of bounded multipliers of $\mathcal H$, whose denominator is cyclic for the action of $Mult(\mathcal H)$. It is known that for spaces $\mathcal H$ with complete Nevanlinna-Pick (CNP) kernel, the inclusion $\mathcal H\subset \mathcal N^+(\mathcal H)$ holds.  We give a new proof of this fact, which includes the new conclusion that every $h\in\mathcal H$ can be expressed as a ratio $b/a\in\mathcal N^+(\mathcal H)$ with $1/a$ already belonging to $\mathcal H$. 

The proof for CNP kernels is based on another Smirnov-type result of independent interest. We consider the Fock space $\mathfrak F^2_d$ of free (non-commutative) holomorphic functions and its algebra of bounded (left) multipliers $\mathfrak F^\infty_d$. We introduce the (left) {\em free Smirnov class} $\mathcal N^+_{left}$  and show that every $H \in \mathfrak F^2_d$ belongs to it. The proof of the Smirnov theorem for CNP kernels is then obtained by lifting holomorphic functions on the ball to free holomorphic functions, and applying the free Smirnov theorem. 

\end{abstract}

\maketitle
\section{Introduction}
\subsection{The Smirnov class}
The {\em Smirnov class} $N^+(\mathbb D)$ is the collection of all holomorphic functions $h$ in the open unit disk $\mathbb D\subset \mathbb C$ of the form
\begin{equation}
  h=\frac{b}{a}
\end{equation}
where $a,b$ are bounded analytic functions, and $a$ is an {\em outer} function; this means that the set of functions $\{af:f\in H^2\}$ is dense in the Hardy space $H^2(\mathbb D)$ (equivalently, the vector $a\in H^2$ is a cyclic vector for the unilateral shift operator $f(z)\to zf(z)$).  As a consequence of Riesz's theorem on inner-outer factorizations, all of the Hardy spaces $H^p(\mathbb D)$, $p>0$, are contained in $N^+$. (See e.g. \cite[Section 2.5]{duren-book}.) In the case $p=2$, the Hilbert space structure allows one to obtain Smirnov representations $h=\frac{b}{a}$ where the $a,b$ can be chosen to satisfy certian additional conditions. In particular, Sarason \cite[Proposition 3.1]{sarason-2008} observed that for $h\in H^2$, we can choose $a,b$ so that additionally
\begin{equation}\label{eqn:sarason-inner}
  |a(z)|^2 +|b(z)|^2=1 \quad \text{for a.e. } |z|=1.
\end{equation}
Indeed, since $\log(1+|h|^2)$ is integbable on the unit circle, from the theory of inner-outer factorizations there exists an $H^\infty$ outer function $a$ so that
\begin{equation}
  |a(z)|^2=\frac{1}{1+|h(z)|^2} \quad \text{for a.e. } |z|=1,
\end{equation}
taking this for $a$ and putting $b=ah$ gives the desired pair. We observe that in this construction, the function $1/a$ already belongs to $H^2(\mathbb D)$, and 
\begin{equation}
  \left\| \frac{1}{a}\right\|^2 = 1+\|h\|^2.
\end{equation}
We also remark that one way to interpret the condition (\ref{eqn:sarason-inner}) is that the map
\begin{equation}
  f\to \begin{pmatrix} af \\ bf\end{pmatrix}
\end{equation}
is an isometric multiplier from $H^2$ into $H^2\oplus H^2$.

In this paper our goal is to prove two analogs of this result of Sarason, one in the setting of so-called ``free holomorphic functions'', which will be discussed shortly, and one for reproducing kernel Hilbert spaces possessing a (normalized) {\em complete Nevanlinna-Pick} (CNP) kernel. 

Following \cite{AHMR-2018}, for any reproducing kernel Hilbert space $\mathcal H$ with multiplier algebra $Mult(\mathcal H)$, we define the {\em $\mathcal H$-Smirnov class} to be
\begin{equation}
  \mathcal N^+_{\mathcal H} =\left\{ \left. \frac{b}{a} \right| \ a,b\in Mult(\mathcal H), \text{ with } a  \text{ an }{\mathcal H} -\text{outer function } \right\}
\end{equation}
Here $Mult(\mathcal H)$ denotes the algebra of functions which multiply $\mathcal H$ into itself boundedly, and we say that $a$ is $\mathcal H$-outer if $a\mathcal H$ is dense in $\mathcal H$.   

Of particular importance to us will be the {\em Drury-Arveson space} $H^2_d$, which is the space of holomorphic fucntions in the unit ball of $\mathbb C^d$, with reproducing kernel $k(z,w)=(1-zw^*)^{-1}$. For this space we will prove the following theorem:

\begin{thm}\label{thm:commutative-main} Let $h\in H^2_d$. Then there exist $a,b\in Mult(H^2_d)$ such that
  \begin{itemize}
\item[i)] $a$ is $H^2_d$-outer, 
\item[ii)]
 \begin{equation}
    h(z)=\frac{b(z)}{a(z)},
  \end{equation}
\item[iii)] the column $\begin{pmatrix} a \\ b\end{pmatrix}$  is a contractive multiplier, and
\item[iv)] $1/a\in H^2_d$,  with $\|1/a\|^2_{H^2_d}\leq 1+\|h\|^2_{H^2_d}$. 
\end{itemize}
\end{thm}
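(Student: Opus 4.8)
The plan is to lift $h$ from the Drury--Arveson space into the full Fock space $\mathfrak F^2_d$, apply the free Smirnov theorem there, and then compress the resulting free factorization back down to $H^2_d$. Recall that $H^2_d$ may be identified isometrically with the symmetric subspace of $\mathfrak F^2_d$: write $\pi\colon\mathfrak F^2_d\to H^2_d$ for the (coisometric) abelianization map and $\pi^*$ for its isometric adjoint onto the symmetric subspace, so that $\pi\pi^*=I$, $\pi^*\pi=P$ is the projection onto the symmetric subspace, and $\pi$ carries the Fock vacuum $1_{\mathfrak F}$ to the constant function $1$. The one structural fact I will use is that the symmetric subspace is co-invariant for the left creation operators, hence for every $A\in\mathfrak F^\infty_d$; equivalently $M_A$ maps $\ker\pi$ (the orthocomplement of the symmetric subspace) into itself.

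Set $H:=\pi^*h\in\mathfrak F^2_d$, so that $\|H\|_{\mathfrak F^2_d}=\|h\|_{H^2_d}$, and apply the free Smirnov theorem to $H$. This produces $A,B\in\mathfrak F^\infty_d$ with $A$ left-outer, $M_AH=B$, the column $\binom{A}{B}$ a contractive (indeed isometric) multiplier of $\mathfrak F^2_d$, and the reciprocal $1/A$ already lying in $\mathfrak F^2_d$ with $\|1/A\|^2\le 1+\|H\|^2$. Define the compressions $M_a:=\pi M_A\pi^*$ and $M_b:=\pi M_B\pi^*$. The engine of the whole descent is the intertwining identity
\begin{equation}
  \pi M_A=M_a\pi,\qquad \pi M_B=M_b\pi,
\end{equation}
which is immediate from co-invariance: since $M_A$ preserves $\ker\pi$ and $\pi$ annihilates $\ker\pi$, one has $\pi M_A=\pi M_A P=\pi M_A\pi^*\pi=M_a\pi$, and likewise for $B$. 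In particular $a,b$ are genuine multipliers of $H^2_d$.

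The four conclusions now follow mechanically. For (ii), apply $\pi$ to $M_AH=B$ and use $H=\pi^*h$: then $M_ah=\pi M_A\pi^*h=\pi B=\pi M_B\,1_{\mathfrak F}=b$, so $ah=b$ and $h=b/a$. For (i), since $\pi$ is surjective and $M_A$ has dense range, $\mathrm{ran}\,M_a=\mathrm{ran}\,(M_a\pi)=\mathrm{ran}\,(\pi M_A)$ is dense, so $a$ is $H^2_d$-outer. For (iii), the column multiplier satisfies $\binom{M_a}{M_b}=(\pi\oplus\pi)\binom{M_A}{M_B}\pi^*$, a composition of the contraction $\binom{M_A}{M_B}$ with the coisometry $\pi\oplus\pi$ and the isometry $\pi^*$, hence a contraction. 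For (iv), put $g:=\pi(1/A)\in H^2_d$; then $M_ag=\pi M_A(1/A)=\pi\,1_{\mathfrak F}=1$, so $g=1/a\in H^2_d$, and since $\pi$ is a contraction $\|1/a\|^2\le\|1/A\|^2\le 1+\|H\|^2=1+\|h\|^2$.

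The genuinely hard part is not this descent but the free Smirnov theorem feeding it: constructing, for an arbitrary $H\in\mathfrak F^2_d$, a left-outer $A$ together with the companion $B=M_AH$ in $\mathfrak F^\infty_d$ realizing a contractive column and the sharp reciprocal bound $\|1/A\|^2\le 1+\|H\|^2$ --- the non-commutative analogue of Sarason's boundary normalization $|a|^2+|b|^2=1$. Granting that result, the only points needing care on the commutative side are that the free-side objects survive compression: that $B$ remains a bounded multiplier (so that $b$ is a bona fide element of $Mult(H^2_d)$) and that the reciprocal $1/A$, a priori only a vector in $\mathfrak F^2_d$, projects to an honest reciprocal $1/a\in H^2_d$ --- both guaranteed by the single intertwining identity above.
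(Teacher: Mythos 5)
Your proposal follows essentially the same route as the paper: take the norm-preserving (symmetric) free lift $H$ of $h$, apply the free Smirnov theorem to get the pair $(A,B)$, and push everything back down to $H^2_d$ via the abelianization map. The paper phrases the descent as evaluation at commuting arguments ($a(z)=A(z)$, $b(z)=B(z)$), citing the facts that $F\mapsto F(z)$ is a co-isometry of $\mathfrak F^2_d$ onto $H^2_d$ and a completely contractive homomorphism of $\mathfrak F^\infty_d$ onto $Mult(H^2_d)$; you re-derive the needed intertwining from co-invariance of the symmetric subspace, and your density argument for (i) is an equivalent reformulation of the paper's Lemma \ref{lem:free-outer-implies-outer}. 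The substance is the same.

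One slip should be fixed: the free Smirnov theorem produces $B=HA$, i.e. $B(X)=H(X)A(X)$ (the column $\begin{pmatrix} A\\ B\end{pmatrix}$ lies in the graph of \emph{left multiplication by} $H$), not $B=M_AH=AH$ as you wrote. In the free setting the order matters and $AH\neq B$ in general; for instance, for $H(X)=X_1(I+X_2)$ the paper's example gives $A=(c_0+c_1X_2)^{-1}$ and $B=X_1(I+X_2)(c_0+c_1X_2)^{-1}$, which is not equal to $(c_0+c_1X_2)^{-1}X_1(I+X_2)$. Your verification of (ii) therefore needs one additional observation: since $\pi$ is evaluation at commuting scalar tuples, $\pi(AH)=\pi(HA)$ (both equal $z\mapsto A(z)H(z)$), so $M_ah=\pi(AH)=\pi(HA)=\pi B=b$ still holds. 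With that one-line patch, all four conclusions go through exactly as you argue, and the remaining checks (the multiplier identities for $a,b$, the contractivity of the column, and the bound on $\|1/a\|$) are correct.
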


It is already known \cite[Theorem 10.3]{alpay-kap-bol} that every $H^2_d$ function is $H^2_d$-Smirnov (so that (i) and (ii) hold; in which case (iii) can be obtained by scaling). The new part of Theorem~\ref{thm:commutative-main} is (iv). (It should be noted that the construction of Smirnov representations of $H^2_d$ functions we carry out here is essentially different from those in \cite{{alpay-kap-bol}} or \cite{AHMR-2017-2}; generically the conclusion (iv) does not hold in those constructions.)

We will also extend Theorem~\ref{thm:commutative-main} to a more general class of spaces. A (normalized) {\em complete Nevanlinna-Pick (CNP)} kernel on a set $\Omega$ is a function $k:\Omega\times\Omega\to \mathbb C$ of the form
\begin{equation}
  k(x,y)=\frac{1}{1-u(x)u(y)^*}
\end{equation}
where $u$ is any function from $\Omega$ into the open unit ball $\mathbb B^d$ (here $d=\infty$ is allowed; in this case $u$ would be a map from $\Omega$ into $\ell^2(\mathbb N)$ satisfying $\|u(x)\|^2=\sum_{n=1}^\infty |u_n(x)|^2<1$ for all $x\in\Omega$). From a basic result of Agler and McCarthy \cite{agler-mccarthy-2000}, if we set $E=u(\Omega)\subset \mathbb B^d$ and define
\begin{equation}
  \mathcal H_E :=cl(span\{(1-zw^*)^{-1}:w\in E\})\subset H^2_d, \label{HE}
\end{equation}
then $\mathcal H(k)$ is isometrically isomorphic to $\mathcal H_E$ via the map $k(\cdot, y)\to (1-zu(y)^*)^{-1}$. Moreover the restriction map $f\to f|_E$ is a co-isometry of $H^2_d$ onto $\mathcal H_E$, and the restriction $b\to b|_E$ is a (complete) contraction of $Mult(\mathcal H^2_d)$ onto $Mult(\mathcal H_E)$.  Finally, functions in $\mathcal H_E, Mult(\mathcal H_E)$ respectively have norm-preserving extensions to functions in $H^2_d, Mult(H^2_d)$ respectively.  An analog of (\ref{thm:commutative-main}) for spaces with a CNP kernel will follow readily from these observations.  As for the special case $H^2_d$, it is already known that $\mathcal H\subset \mathcal N^+(\mathcal H)$ holds for all normalized CNP spaces \cite{AHMR-2017}; the new conclusion here will be that $1/a\in\mathcal H$. 

\subsection{Free holomorphic functions} 
 Theorem~\ref{thm:commutative-main} will be proved, via lifting arguments, from a corresponding ``free'' version. To state it, we first quickly recall some basic definitions and results about noncommutative power series.

By the {\em free unit ball }$\mathcal B_d$ we mean the set of all $d$-tuples $X=(X_1, \dots X_d)$ of $n\times n$ matrices, over all sizes $n$, which obey the norm estimate
\begin{equation}
  \left\| \sum_{j=1}^d X_jX_j^* \right\|<1.
\end{equation}
(Throughout, we will write such expressions as though $d$ is finite, but we will allow $d=\infty$, that is, infinite sequences, as this will be important for our applications). 

We write $\mathbb F_d^+$ for the free semigroup on $d$ letters $\{1, 2, \dots \}$ (again, $d$ countably infinite is allowed); that is, the set of all words $\alpha= i_1i_2\cdots i_k$, over all (finite) lengths $k$, where each $i_j\in\{1, 2, \dots\}$. We write $|\alpha|=k$ for the length of $\alpha$. We also include the {\em empty word} in $\mathbb F_d^+$, and denote it $\varnothing$, and put $|\varnothing|=0$.  There is a {\em transpose map} which reverses the letters in a word: if $\alpha=i_1i_2\cdots i_k$ then we write
\begin{equation}
  \alpha^\dag = i_ki_{k-1}\cdots i_2i_1.
\end{equation}

For a $d$-tuple of $n\times n$ matrices $X=(X_1, X_2, \dots X_d)$ we write, for each word $\alpha = i_1i_2\cdots i_k$ in $\mathbb F_d^+$, 
\begin{equation}
  X^\alpha := X_{i_1}X_{i_2}\cdots X_{i_k},
\end{equation}
and declare $X^\varnothing =I_n$. By a {\em free power series} we mean an expression of the form
\begin{equation}\label{free-series-def}
  F(X)\sim \sum_{\alpha\in \mathbb F_d^+} c_\alpha X^\alpha
\end{equation}
By a {\em free holomorphic function} in the free unit ball $\mathcal B_d$ we mean a free power series with the property that the sum
\begin{equation}
  \sum_{k=0}^\infty \left\| \sum_{|\alpha|=k} c_\alpha X^\alpha\right\|
\end{equation}
converges for all $X\in\mathcal B_d$. In this case, for each $X\in \mathcal B_d$, of size $n\times n$, the expression (\ref{free-series-def}) converges in the norm of $M_n$ to some matrix, which we denote $F(X)$. (One thinks of $F$ as a ``graded'' function: For each fixed input $d-$tuple, $X = (X_1, ... , X_d)$, of $n\times n$ matrices, $F$ outputs the  $n\times n$ matrix $F(X)$ \cite{KVV,BMV}. For example, if $X=(0_n,0_n,\dots 0_n)$, then $F(X)= c_\varnothing I_n$.) 

The {\em Fock space} $\mathfrak F^2_d$ consists of all free power series whose coefficients are square-summable: $\sum_{\alpha\in\mathbb F_d^+} |c_\alpha|^2$. The Fock space is a Hilbert space with the $\ell^2$ inner product. Every such series represents a free holomorphic function in $\mathcal B_d$. We let $\mathcal F_d^\infty$ denote the set of all bounded free holomorphic functions in $\mathcal B_d$, that is, 
\begin{equation}
  \mathfrak F_d^\infty =\{F: \sup_{X\in\mathcal B_d} \|F(X)\|<\infty\}
\end{equation}
 $\mathfrak F_d^\infty$ is a Banach algebra when equipped with the supremum norm.  It is a fact that if $F\in \mathfrak F_d^\infty$ and $G\in \mathfrak F^2_d$, then the product $F(X)G(X)$ (defined by the evident Cauchy product of free series) also belongs to $\mathfrak F^2_d$, and the operator sending $G$ to $FG$ is bounded, with norm equal to $\|F\|_\infty$. We call such functions $F$ {\em left multipliers} of $\mathfrak F^2_d$.  Similarly, one can consider free functions which act by bounded right multiplication, called {\em right multipliers} and it is a fact that a series
\begin{equation}
  F(X)=\sum_{\alpha\in \mathbb F^+_d} c_\alpha X^\alpha
\end{equation}
 is a bounded left multiplier if and only if 
\begin{equation}
  F^\dag(X)=\sum_{\alpha\in \mathbb F^+_d} c_{\alpha^\dag} X^\alpha
\end{equation}
is a bounded right multiplier.

\begin{defn}\label{def:free-smirnov}
The {\em left Smirnov class} $\mathcal N^+_{left}(\mathcal B_d)$ is defined to be the set of all free
functions $H(X)$ defined in the free ball, which can be written as
\begin{equation}
  H(X) =B(X)A(X)^{-1}  
\end{equation}
where $A,B$ are bounded left multipliers and $A$ is a left outer
function. (This means that left multiplication by $F\to AF$ has dense range
in $\mathfrak F^2_d$.)

\end{defn}

\begin{defn}
  A pair of left multipliers $A,B$ is called a (left) {\em inner-outer pair} if the $A$ is left outer, and the column
  \begin{equation}
    \begin{pmatrix} A \\B\end{pmatrix}
  \end{equation}
is an isometric (left) multiplier of the Fock space $\mathfrak F^2_d$ into $\mathfrak F^2_d \oplus \mathfrak F^2_d$. 
\end{defn}

We then have the following ``free'' Smirnov theorem: 

\begin{thm}\label{thm:free-main} 
  Let $H(X)$ be a free holomorphic function in the free unit ball. Then $H$ belongs to the left Smirnov class $\mathcal N^+_{left}(\mathcal B_d)$ if and only if left  multiplication by $H$ is a closed, densely defined operator in $\mathfrak F^2_d$.  In this case, there exist left multipliers $A, B\in\mathfrak F^\infty_d$ such that:
  \begin{itemize}
  \item[i)] $A$ is a left outer function, 
\item[ii)] For all $X$ in the free unit ball,
\begin{equation}\label{eqn:factor-main}
    H(X) = B(X)A(X)^{-1},
  \end{equation}
\item[iii)] The column
  \begin{equation}
    \begin{pmatrix} A\\ B\end{pmatrix} \quad  \mbox{is an isometric (left) multiplier.}
  \end{equation}
  \end{itemize}
Moreover, the pair $A,B$ satisfying (i)-(iii) is unique up to multiplication by a unimodular scalar.

In addtion, every $H\in \mathfrak F^2_d$ belongs to the left Smirnov classes, and in this case the factorization (\ref{eqn:factor-main}) has the property that the function $A^{-1}$ belongs to $\mathfrak F^2_d$ and $\|A^{-1}\|^2 = 1+\|H\|^2$. 
\end{thm}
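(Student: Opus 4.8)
The plan is to read every conclusion off of a single object: the graph of the densely defined operator $M_H$ of left multiplication by $H$, and to factor it by a noncommutative Beurling--Lax theorem. The one soft input used throughout is that for each $X$ in the free ball the evaluation $F\mapsto F(X)$ is bounded on $\mathfrak F^2_d$. This makes $M_H$, with its natural domain $\mathcal D=\{f\in\mathfrak F^2_d: Hf\in\mathfrak F^2_d\}$, automatically \emph{closed} for any free holomorphic $H$: if $f_n\to f$ and $Hf_n\to\psi$, then evaluating at each $X$ gives $\psi(X)=H(X)f(X)$, so $\psi=Hf$. This yields the stated equivalence at once. If $H=BA^{-1}\in\mathcal N^+_{left}(\mathcal B_d)$ with $A$ left outer, then $A\mathfrak F^2_d\subseteq\mathcal D$ is dense and $M_H(Af)=Bf$, so $M_H$ is closed and densely defined; conversely, if $M_H$ is closed and densely defined, then the factorization constructed below exhibits $H$ in $\mathcal N^+_{left}(\mathcal B_d)$.

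For the factorization, form the closed subspace
\[
\mathcal M:=\overline{\{(f,Hf): f\in\mathcal D\}}\subseteq\mathfrak F^2_d\oplus\mathfrak F^2_d.
\]
Since left multiplication by $H$ commutes with right multiplication by the coordinates $X_1,\dots,X_d$, writing $R_j$ for right multiplication by $X_j$ we see that $\mathcal M$ is invariant under the diagonal operators $R_j\oplus R_j$. By the noncommutative Beurling--Lax--Halmos theorem, $\mathcal M$ is the range of an isometric left multiplier from $\mathfrak F^2_d\otimes\mathcal E$ into $\mathfrak F^2_d\oplus\mathfrak F^2_d$, where $\mathcal E$ is the wandering subspace. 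The multiplicity is exactly one: when $H\in\mathfrak F^2_d$ the vacuum $\mathbf 1$ lies in $\mathcal D$ and, because right multiplication by any free polynomial $p$ sends $v:=(\mathbf 1,H)$ to $(p,Hp)$, the space $\mathcal M$ is the \emph{cyclic} $R_j$--invariant subspace generated by $v$. For such a cyclic subspace one has $\mathcal M=\mathbb C v+\overline{\sum_j(R_j\oplus R_j)\mathcal M}$, while the first coordinate of $v$ is $\mathbf 1$ and so $v$ is not in $\overline{\sum_j(R_j\oplus R_j)\mathcal M}$ (whose first coordinates are orthogonal to the vacuum); hence $\dim\mathcal E=1$ and the multiplier is a genuine column $\Phi=\begin{pmatrix}A\\B\end{pmatrix}$. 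Its range being $\mathcal M$ gives (i)--(iii): the first coordinate of $\mathcal M$ is dense, so $A$ is left outer; and $(Af,Bf)\in\mathcal M$ forces $Bf=H(Af)$, i.e. $B=HA$ and $H=BA^{-1}$; and $\Phi$ is isometric by construction. Uniqueness up to a unimodular scalar is the usual Beurling uniqueness, since two isometric columns with range $\mathcal M$ differ by a unitary on the one--dimensional $\mathcal E$.

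It remains to prove the distinctive conclusion for $H\in\mathfrak F^2_d$. Because $v=(\mathbf 1,H)\in\mathcal M=\mathrm{ran}\,\Phi$ and the range of an isometry is closed, there is $g\in\mathfrak F^2_d$ with $\Phi g=(\mathbf 1,H)$, that is $Ag=\mathbf 1$ and $Bg=H$. The first equation identifies $g=A^{-1}$ and shows $A^{-1}\in\mathfrak F^2_d$; the norm identity then follows from the isometry of $\Phi$:
\[
\|A^{-1}\|^2=\|g\|^2=\|\Phi g\|^2=\|(\mathbf 1,H)\|^2=1+\|H\|^2.
\]

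I expect the main obstacle to be the Beurling--Lax step and, within it, the multiplicity count. For $H\in\mathfrak F^2_d$ the cyclicity of $(\mathbf 1,H)$ makes $\dim\mathcal E=1$ transparent, but for a general Smirnov $H$ (where $\mathbf 1$ need not lie in $\mathcal D$) one must argue the column structure more carefully, and throughout one must handle the domains of the unbounded operator $M_H$ with care. By contrast, once the column $\Phi$ is produced, the new conclusion $A^{-1}\in\mathfrak F^2_d$ with $\|A^{-1}\|^2=1+\|H\|^2$ is essentially free, resting only on the observation that the seed vector $(\mathbf 1,H)$ already lies in the closed range of $\Phi$.
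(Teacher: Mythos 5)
Your skeleton is the same as the paper's---realize the graph of the closed, densely defined operator of left multiplication by $H$ as a subspace invariant under the operators $R_j\oplus R_j$, apply the Davidson--Pitts Beurling theorem, and read conclusions (i)--(iii) and the $A^{-1}$ statement off the resulting isometric column---but the step you yourself flag as the main obstacle, the multiplicity count $\dim\mathcal E=1$, is genuinely missing, and it is exactly where the paper's key idea lives. Your count is only attempted when $H\in\mathfrak F^2_d$, so the direction ``closed and densely defined $\Rightarrow$ left Smirnov'' (and the existence of the canonical pair) for a \emph{general} free holomorphic $H$ is not proved at all; the theorem's ``if and only if'' needs it. The paper fills this with a \emph{locality} property of the domain, read off the power series: if $F\in\mathcal D_H$ has vanishing constant term, then $R_j^*F\in\mathcal D_H$ and $H(R_j^*F)=R_j^*(HF)$. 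Introducing the contraction $Q:G(T_H)\to\mathfrak F^2_d$, $F\oplus HF\mapsto F$ (which has dense range, so $Q^*$ is injective), one checks that $Q^*\xi_\varnothing$ is a wandering vector, and locality shows that any wandering vector $F\oplus HF$ orthogonal to $Q^*\xi_\varnothing$ satisfies $F=\sum_j R_jR_j^*F$ with each $R_j^*F\oplus H(R_j^*F)$ in the graph, hence lies in $(R\oplus R)G(T_H)$ and must vanish. That argument needs no assumption beyond dense definedness and is what makes the full statement go through.

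Even in the case $H\in\mathfrak F^2_d$, your assertion that $\mathcal M$ (the whole closed graph) \emph{is} the cyclic subspace generated by $v=(\mathbf 1,H)$ is precisely the claim that the free polynomials are a core for $M_H$, i.e.\ that every $(f,Hf)$ with $f\in\mathcal D$ is a graph-norm limit of pairs $(p,Hp)$. This is not obvious, and the natural proofs of it pass through the Smirnov factorization you are trying to establish, so as written the step is circular. It is repairable: run your argument on the cyclic invariant subspace $\mathcal M_0\subseteq G(M_H)$ generated by $v$ instead of on the full graph. Your dimension count (which is correct \emph{given} cyclicity) applies to $\mathcal M_0$; its first coordinates contain every polynomial, so the resulting $A$ is still left outer; $\mathcal M_0\subseteq G(M_H)$ still forces $B=HA$, hence $H=BA^{-1}$ by invertibility of $A(X)$ for outer $A$; and $v\in\mathcal M_0$ still yields $A^{-1}\in\mathfrak F^2_d$ with $\|A^{-1}\|^2=1+\|H\|^2$. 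But this repair only proves existence for $H\in\mathfrak F^2_d$, not the general equivalence. Finally, your uniqueness argument assumes that any pair satisfying (i)--(iii) produces a column whose range equals $\mathcal M$; conditions (i)--(iii) only guarantee that this range is an $R\oplus R$-invariant subspace \emph{contained in} $G(M_H)$ (a priori $A_1\mathfrak F^2_d$ could be a proper dense subset of $\mathcal D$). The paper closes this by a second application of Beurling to the nested subspaces, factoring $\Phi_1=\Phi\,\Theta$ with $\Theta$ an isometric left multiplier, and then using outerness of $A_1$ together with the fact that the only unitaries commuting with $\mathcal R_d$ are unimodular scalars.
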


The first part of this theorem is seen to be a free analogue of another result of Sarason \cite[Lemma 5.2]{sarason-2008}, which says that a function $h$ belongs to the Smirnov class $N^+(\mathbb D)$ if and only if mulitplication by $h$ has dense domain in $H^2(\mathbb D)$, in which case $h$ can be expressed (essentially uniquely) as a quotient $b/a$ with $a$ outer and $|a|^2+|b|^2=1$ a.e. on the circle. We will call the pair $A,B$ in the conclusion of the theorem the {\em canonical (left) representation} of the (left) Smirnov function $H$.

\section{Free Smirnov functions}\label{sec:free-smirnov}

We begin this section by carefully enumerating the results about free holomorphic functions that we require. Most of the basic results we will use may be found in Popescu \cite{popescu-2006}. While the results are stated there for a finite number of free variables $X_1, \dots X_d$, it is a straighforward matter to check that the proofs hold also for countably many variables. 

The {\em Fock space} $\mathcal F^2_d$ is a Hilbert space with orthonormal basis $\{\xi_\alpha\}_{\alpha\in\mathbb F^+_d}$ labeled by the free semigroup on $d$ letters. For each letter $i$ there is an isometric operator $L_i$ acting in $\mathcal F^2_d$ defined on the orthonormal basis $\{\xi_\alpha\}$ by
  \begin{equation}
    L_i\xi_\alpha = \xi_{i\alpha}
  \end{equation}
(called the {\em left creation operators}), here $i\alpha$ just means the word obtained from $\alpha$ by appending the letter $i$ on the right. One analogously defines the right creation operators $R_i$. The operators $L_i$ (and the $R_i$) have orthogonal ranges, and hence obey the identity
  \begin{equation}
    L_i^*L_j=\delta_{ij}I.
  \end{equation}
The {\em free semigroup algebra} $\mathcal L_d$ is the (unital) WOT-closed algebra generated by the $L_i$. For a word $\alpha =i_1i_2\dots i_k$ we write $L_\alpha=L_{i_1}L_{i_2}\cdots L_{i_k}$. One can show that the corresponding algebra generated by the right creation operators, $\mathcal R _d$, and $\mathcal L _d$ are each other's commutants (and in fact that $\mathcal R _d$ is simply the image of $\mathcal L _d$ under conjugation by the `tranpose unitary': precisely, if $W:\mathfrak F^2_d\to \mathfrak F^2_d$ is the unitary map which acts on basis vectors as $W\xi_\alpha =\xi_{\alpha^\dag}$, then we have $WL_\alpha W^* = R_{\alpha^\dag}$).  Each element $F$ of the free semigroup algebra admits a Fourier-like expansion
\begin{equation}
  F\sim \sum_{\alpha\in\mathbb F^+_d} c_{\alpha} L_\alpha,
\end{equation}
and the Ces\`aro means of this series converge in the strong operator topology (SOT) to $F$ \cite{DPac}. 
These objects are related to the free holomorphic functions described in the introduction, as follows:
\begin{itemize}
\item[1)] \cite[Theorem 1.1]{popescu-2006} If $\sum_{\alpha\in\mathbb F^+_d}c_\alpha\xi_\alpha\in\mathfrak F^2_d$, then for each $X=(X_1, \dots ,X_d)$ in the free unit ball $\mathcal B_d$, the series
  \begin{equation}
     \sum_{k=0}^\infty \left\| \sum_{|\alpha|=k} c_\alpha X^\alpha\right\|
  \end{equation}
converges. Hence $H(X)=\sum_{\alpha\in\mathbb F^+_d} c_\alpha X^\alpha$ defines a free holomorphic function in $\mathcal B_d$, and we may identify the Fock space with the space of free holomorphic functions with $\ell^2$ coefficients.  
\item[2)] \cite[Theorem 3.1]{popescu-2006} A free power series $F(X) =\sum_{\alpha\in\mathbb F^+_d} c_\alpha X^\alpha$ satisfies $\sup_X \|F(X)\|<\infty$ if and only if the sum $\sum_{\alpha\in\mathbb F^+_d} c_{\alpha} L_\alpha$ is the Fourier expansion of an element $F\in\mathcal L_d$, and moreover $\|F\|_{\mathcal L_d} =\sup_{X\in\mathcal B_d}\|F(X)\|$.  Thus we may identify $\mathcal L_d$ with $\mathfrak F^\infty_d$. (In fact this identification is completely isometric). 
\item[3)] \cite[Theorem 1.3]{popescu-2006} For any $F$ in $\mathfrak F^2_d$ or $\mathfrak F^\infty_d$, the map $F\to F(X)$ is continuous into $M_n$, for each $X\in \mathcal B_d$. 
\item[4)] \cite[Theorem 1.4]{popescu-2006} If $F$ and $G$ are free holomorphic functions, expressed as power series
  \begin{equation}
    F(X) = \sum_{\alpha\in\mathbb F^+_d} f_\alpha X^\alpha, \quad H(X) = \sum_{\alpha\in\mathbb F^+_d} g_\alpha X^\alpha,
  \end{equation}
then the product $H(X)=F(X)G(X)$ is expressible as a free power series, with coefficients
\begin{equation}
  h_\alpha = \sum_{\beta\gamma =\alpha} f_\beta g_\gamma.
\end{equation}
In particular, the action of an element $F$ of the free semigroup algebra on the Fock space may be interpreted as left mulitplication by the free holomorphic function $F(X)$. 
\end{itemize}

Recall now the definitions of the left Smirnov class given in the introduction \ref{def:free-smirnov}, and the definition of free outer functions. 

It is evident that if $H$ is a free holomorphic function belonging to the left Smirnov
class, then left multiplication by $H$ gives a densely defined
operator in $\mathfrak F^2_d$.  Indeed, if $H(X)=B(X)A(X)^{-1}$ is
left Smirnov, then the set of free functions $A(X)F(X)$, where
$F\in\mathfrak F^2_d$, is dense in $\mathfrak F^2_d$ (since $A$ is
assumed free outer), and hence for all $F\in\mathfrak F^2_d$ we have
\begin{equation}
  HAF= BA^{-1}AF= BF\in\mathfrak F^2_d.   
\end{equation}
Moreover, it is straightforward to verifty that if $H$ is left
Smirnov, and we define
\begin{equation}
    \mathcal D_H = \{F\in \mathfrak F^2_d : HF\in \mathfrak F^2_d\}
\end{equation}
then the densely defined operator $T_H:\mathcal D\to \mathfrak F^2_d$ given by
$T_HF=HF$ is {\em closed}. (See Lemma~\ref{lem:free-left-multipliers} below.) It is also evident
that $\mathcal D_H$ is invariant for the algebra of (bounded) right mulitplication
operators $\mathcal R_d$. The main goal of this section is to prove a converse to
this set of statements. Namely, we will show that if $H$ is a free holomorphic function
and left multiplication by $H$ gives a closed, densely defined
operator in $\mathfrak F^2_d$, then $H$ is a left Smirnov
function.
%


\begin{lemma}\label{lem:outer-invertible}
  If the free holomorphic function $A\in\mathfrak F^\infty_d$ is left outer, then for each $X$ in the free unit ball, $A(X)$ is invertible. 
\end{lemma}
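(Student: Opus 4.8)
The plan is to argue by contradiction: if $A(X)$ fails to be invertible at some $X$ in the free ball, I will produce a nonzero functional annihilating the subspace $A\mathfrak F^2_d$, contradicting the density of that subspace which \emph{is} the definition of left-outerness. This is the noncommutative analogue of the classical fact that an outer function on the disk cannot vanish at an interior point, because a zero of $a$ at $w$ would force $aH^2$ into the proper closed subspace $\{f:f(w)=0\}$.

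First I would record the two facts about point evaluation that the argument consumes. Fix $X=(X_1,\dots,X_d)$ in the free ball, of size $n\times n$, and vectors $u,v\in\mathbb C^n$. By Popescu's continuity result (item 3, \cite[Theorem 1.3]{popescu-2006}) the map $F\mapsto F(X)$ is bounded from $\mathfrak F^2_d$ into $M_n$, so the linear functional $\ell_{u,v}(F):=\langle F(X)u,v\rangle=v^*F(X)u$ is bounded on $\mathfrak F^2_d$. (If one wants this directly: writing $F=\sum_\alpha c_\alpha\xi_\alpha$ gives $\ell_{u,v}(F)=\sum_\alpha c_\alpha\, v^*X^\alpha u$, and Cauchy--Schwarz together with the estimate $\sum_\alpha |v^*X^\alpha u|^2\le \|u\|^2\,\langle (\sum_\alpha X^\alpha (X^\alpha)^*)v,v\rangle$ bounds it; here the series $\sum_\alpha X^\alpha(X^\alpha)^*=\sum_k\Phi^k(I)$, with $\Phi(T)=\sum_j X_jTX_j^*$, converges precisely because $\|\Phi\|=\|\sum_j X_jX_j^*\|<1$.) Second, by the product formula (item 4), left multiplication is computed pointwise: $(AF)(X)=A(X)F(X)$ for every $F\in\mathfrak F^2_d$.

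Now suppose $A(X)$ is not invertible for some $X$ in the free ball. Since $A(X)$ is a square matrix, there is a nonzero $v\in\mathbb C^n$ with $A(X)^*v=0$. Then for every $F\in\mathfrak F^2_d$ and every $u\in\mathbb C^n$,
\begin{equation}
  \ell_{u,v}(AF)=v^*A(X)F(X)u=\big(A(X)^*v\big)^*F(X)u=0 .
\end{equation}
Hence the bounded functional $\ell_{u,v}$ annihilates $A\mathfrak F^2_d$. Because $A$ is left outer, $A\mathfrak F^2_d$ is dense in $\mathfrak F^2_d$, and continuity of $\ell_{u,v}$ forces $\ell_{u,v}\equiv 0$ on all of $\mathfrak F^2_d$.

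This is the contradiction: taking $F=\xi_\varnothing$ (the constant function $I_n$, since $X^\varnothing=I_n$) and $u=v$ gives $\ell_{v,v}(\xi_\varnothing)=v^*I_nv=\|v\|^2\neq 0$. Therefore $A(X)$ is invertible for every $X$ in the free ball. The only substantive step is the boundedness of the evaluation functionals, and that is supplied by Popescu's theory (or by the short Cauchy--Schwarz estimate above, where the free-ball condition enters exactly to guarantee convergence of $\sum_\alpha X^\alpha(X^\alpha)^*$); everything else is the standard "an interior zero obstructs cyclicity'' argument, so I do not expect a genuine obstacle here.
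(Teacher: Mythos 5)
Your proof is correct and follows essentially the same route as the paper: both argue by contradiction, taking a left null vector $v$ with $v^*A(X)=0$ and using the continuity of point evaluation (Popescu's \cite[Theorem 1.3]{popescu-2006}) to produce a nonzero continuous functional annihilating $A\mathfrak F^2_d$, contradicting left-outerness. The only cosmetic differences are that the paper uses the single row-vector-valued map $e_{X,v}:F\mapsto v^*F(X)$ where you use the scalar functionals $\ell_{u,v}$, and you additionally spell out the Cauchy--Schwarz boundedness estimate and the evaluation at $\xi_\varnothing$ that the paper leaves implicit.
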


\begin{proof}
  Suppose $A(X)$ is not invertible for some $X$. Then there exists a nonzero vector $v\in \mathbb C^g$ such that $v^*A(X)=0$.  Write $M_A$ for the operator of left multiplication by $A$. For any $F\in \mathfrak F_d^2$, we then have
  \begin{equation}
    v^*(AF)(X)= v^*A(X)F(X)=0.
  \end{equation}
which shows that every function $G$ in $\mathrm{Ran} (M_A)$ also has $v^*G(X)=0$. But then the map 
\begin{equation}
e_{X,v}:F\to v^*F(X)  
\end{equation}
is a continuous, nonzero linear map from $\mathfrak F^2_d$ into $\mathbb C^n$, which annihilates the range of $M_A$, whence this range is not dense and $A$ is not left outer. 
\end{proof}

We now consider certain unbounded operators in $\mathfrak F^2_d$ induced by left mulitplication by free holomorphic functions. 

\begin{defn}
  A closed, densely-defined linear operator $T:Dom(T)\to \mathfrak F_d^2$ {\em commutes with the right free shift $R=(R_1, ..., R_d)$} if for each $j=1, \dots d$
  \begin{itemize}
  \item[i)] $R_j Dom(T)\subset Dom(T)$, and 
  \item[ii)] $R_jT=TR_j$ on $Dom(T)$.  
  \end{itemize}
\end{defn}

\begin{defn}
  Let $T$ be a closed, densely defined operator in $\mathfrak F_d^2$.  Say $T$ is {\em local} if $Dom(T)$ has the following property: whenever $F\in Dom(T)$ satisfies $\widehat{F}(0):=\langle F, \xi_\varnothing\rangle =0$, then for each $j$ the backward shifts $R_j^*F$ belong to $Dom(T)$.
\end{defn}

\begin{lemma}\label{lem:free-left-multipliers}
  Let $H$ be a free holomorphic function in the ball, with free power series expansion
  \begin{equation}
    H(X) = \sum_{\alpha} h_\alpha X^\alpha.
  \end{equation}
 Let $\mathcal D_H$  be the vector space of all Fock space functions $F\sim \sum_{\alpha\in\mathbb F^+_d} f_\alpha \xi_\alpha$ with the property that the series
 \begin{equation}
   HF:= \sum_{\alpha}\left\{\sum_{\beta\gamma=\alpha} h_\beta f_\gamma\right\}\xi_\alpha
 \end{equation}
defines an element of $\mathfrak F^2_d$. Assume that $\mathcal D_H$ is dense in $\mathfrak F_d^2$. Then the (densely defined) operator $T_H:\mathcal D_H\to \mathfrak F_d^2$ given by
\begin{equation}
  T_H F :=HF
\end{equation}
is closed, local, and commutes with the right free shift. 
\end{lemma}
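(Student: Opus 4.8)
The plan is to verify each of the three claimed properties — closedness, locality, and commutation with the right free shift — directly from the coefficient-level description of $T_H$, exploiting the fact that evaluation of a Fock space function at a fixed point $X$ in the free ball is continuous (item 3 of Popescu's results) and that the Cauchy product formula (item 4) gives the coefficients of $HF$ explicitly. The key observation throughout is that the coefficient of $\xi_\alpha$ in $HF$ depends only on $h_\beta$ for $|\beta|\le|\alpha|$ and on the finitely many $f_\gamma$ with $|\gamma|\le|\alpha|$, so that each individual coefficient of $HF$ is a \emph{continuous linear functional} of $F\in\mathfrak F^2_d$.

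For \textbf{closedness}, I would take a sequence $F_n\in\mathcal D_H$ with $F_n\to F$ and $T_HF_n=HF_n\to G$ in $\mathfrak F^2_d$, and show $F\in\mathcal D_H$ with $HF=G$. Since norm convergence in $\mathfrak F^2_d$ forces convergence of each Fourier coefficient, $\widehat{F_n}(\gamma)\to\widehat{F}(\gamma)$ for every $\gamma$. Fixing $\alpha$, the coefficient $\widehat{HF_n}(\alpha)=\sum_{\beta\gamma=\alpha}h_\beta\,\widehat{F_n}(\gamma)$ is a \emph{finite} sum of terms, each of which converges; hence $\widehat{HF_n}(\alpha)\to\sum_{\beta\gamma=\alpha}h_\beta\,\widehat{F}(\gamma)$, which is by definition the candidate coefficient $\widehat{HF}(\alpha)$. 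On the other hand $\widehat{HF_n}(\alpha)\to\widehat{G}(\alpha)$ because $HF_n\to G$ in norm. Matching the two limits gives $\widehat{HF}(\alpha)=\widehat{G}(\alpha)$ for every $\alpha$, so the formal series $HF$ equals $G\in\mathfrak F^2_d$; in particular $HF\in\mathfrak F^2_d$, so $F\in\mathcal D_H$ and $T_HF=G$. This is the standard ``coefficients pass to the limit'' argument and is the technical core; the only thing to be careful about is that the inner sum over $\beta\gamma=\alpha$ is genuinely finite (there are at most $|\alpha|+1$ factorizations), which is what licenses interchanging the limit with the sum.

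For \textbf{commutation with the right shift}, I would compute on the coefficient level. If $F\in\mathcal D_H$, then $R_jF$ has Fourier coefficients obtained by appending $j$ on the right, and the Cauchy product shows $H(R_jF)=(HF)R_j^{\mathrm{word}}$ in the appropriate sense, i.e. right multiplication by $R_j$ commutes with left multiplication by $H$ because left and right actions on the free semigroup act on opposite ends of each word and hence never interfere. Concretely, $\widehat{H(R_jF)}(\alpha j)=\widehat{HF}(\alpha)$ and the coefficient vanishes unless $\alpha$ ends in $j$, which is exactly the statement $HR_jF=R_j(HF)$; since $HF\in\mathfrak F^2_d$ implies $R_j(HF)\in\mathfrak F^2_d$, we get $R_jF\in\mathcal D_H$ and $T_HR_jF=R_jT_HF$. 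This establishes both $R_j\,\mathrm{Dom}(T_H)\subset\mathrm{Dom}(T_H)$ and the intertwining.

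For \textbf{locality}, suppose $F\in\mathcal D_H$ with $\widehat{F}(\varnothing)=0$; I must show $R_j^*F\in\mathcal D_H$ for each $j$. The backward shift $R_j^*$ acts on coefficients by deleting a trailing $j$ (and killing words not ending in $j$), so $\widehat{R_j^*F}(\gamma)=\widehat{F}(\gamma j)$. Computing $\widehat{H(R_j^*F)}(\alpha)=\sum_{\beta\gamma=\alpha}h_\beta\,\widehat{F}(\gamma j)$, I would identify this with $\widehat{R_j^*(HF)}(\alpha)=\widehat{HF}(\alpha j)=\sum_{\beta\delta=\alpha j}h_\beta\,\widehat{F}(\delta)$; the hypothesis $\widehat{F}(\varnothing)=0$ is precisely what is needed to discard the boundary factorization $\delta=\varnothing$ (equivalently $\beta=\alpha j$), after which the two sums match term by term under $\delta=\gamma j$. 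Thus $H(R_j^*F)=R_j^*(HF)$, and since $HF\in\mathfrak F^2_d$ gives $R_j^*(HF)\in\mathfrak F^2_d$, we conclude $R_j^*F\in\mathcal D_H$. The one subtle point, and what I expect to be the main obstacle, is bookkeeping the boundary term in this last identity: without the assumption $\widehat{F}(\varnothing)=0$ the identity $H(R_j^*F)=R_j^*(HF)$ fails by exactly the term $h_\cdot\,\widehat{F}(\varnothing)$, so locality is genuinely a statement about how left multiplication interacts with the \emph{non-commuting} backward shift, and the hypothesis on $\widehat{F}(\varnothing)$ cannot be dropped.
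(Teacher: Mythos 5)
Your proof is correct, and for two of the three properties it is the same argument as the paper's: the commutation with $R_j$ and the locality statement are exactly the power-series computations the paper compresses into ``as is immediately seen by inspecting the free power series expansions'' — your explicit isolation of the boundary factorization $\beta=\alpha j$, $\delta=\varnothing$, which is killed precisely by the hypothesis $\widehat F(\varnothing)=0$, is the right bookkeeping and is what the paper leaves to the reader. Where you genuinely diverge is closedness. The paper argues function-theoretically: if $F_n\to F$ and $HF_n\to G$ in $\mathfrak F^2_d$, then by continuity of point evaluations (item 3 of the Popescu results quoted in Section 2) one gets $F_n(X)\to F(X)$ and $H(X)F_n(X)\to G(X)$ in matrix norm for each $X$ in the free ball, hence $H(X)F(X)=G(X)$ for all $X$, and therefore $HF=G$. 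Your argument never leaves the level of formal series: each Fourier coefficient is a continuous functional, the Cauchy-product coefficient $\widehat{HF}(\alpha)=\sum_{\beta\gamma=\alpha}h_\beta\widehat F(\gamma)$ is a finite sum (at most $|\alpha|+1$ terms), so the limit passes coefficientwise and the candidate series for $HF$ has exactly the coefficients of $G$. Your route is the more elementary of the two: it needs neither the continuity of matrix-point evaluations nor the fact, used implicitly by the paper, that a free holomorphic function is determined by its values on the free ball (i.e.\ that the passage from $\ell^2$ power series to functions is injective), and it interfaces directly with the definition of $\mathcal D_H$, which is stated in terms of the formal Cauchy product. What the paper's route buys is consistency with its overall viewpoint identifying Fock space elements with free holomorphic functions, so that closedness of $T_H$ is seen as a statement about pointwise limits of functions; but as a self-contained verification of the lemma, your coefficient argument is complete and, if anything, tighter.
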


\begin{proof} First, as is immediately seem by inspecting the free power series expansions, if $F\in\mathfrak F_d^2$ and $HF\in \mathfrak F_d^2$, then also $(R_jF)H=R_j(HF)\in\mathfrak F_d^2$, which shows that $T_H$ commutes with $R$. To see that $T_H$ is local, again directly from the power series one sees that if $F\in\mathcal D_H$  has no constant term (i.e. $f_\varnothing=0$), so that $F = \sum_j R_jR_j^*F$, then we have for each $j$
  \begin{equation}
   H (R_j^*F) = R_j^*(HF).
  \end{equation}
Since the right-hand side belongs to $\mathfrak F_d^2$, so does the left, hence $R_j^*F\in\mathcal D_H$ for each $j$. 

Finally, let us show that $T_H$  is closed. Suppose $(F_n)$ is a sequence from $\mathcal D_H$ with $F_n\to F$ in $\mathfrak F^2_d$ and $T_HF_n =HF_n \to G$ for some $G\in \mathfrak F_d^2$. Then, in particular, these series converge pointwise as free holomorphic functions, i.e. for all $X$ in the free unit ball we have $F_n(X)\to F(X)$ and $H(X)F_n(X)\to G(X)$ in matrix norm.  But then it follows that $H(X)F_n(X)\to H(X)F(X)$ for all $X$; therefore $H(X)F(X)=G(X)$ for all $X$, which means that $HF=G\in\mathfrak F_d^2$. This proves simultaneously that $F\in \mathcal D_H$ and $T_HF =G$, so that $T_H$ is closed. 
\end{proof}

We refer to $H$ as a {\em (closed) densely defined left multiplier} of $\mathfrak F_d^2$, unless otherwise noted we will always assume $H$, viewed as a left multiplier, has domain $\mathcal D_H$ as described in the lemma.

For any operator $T$ defined in $\mathfrak F_d^2$, as usual the graph of $T$ is the subspace
\begin{equation}
  G(T) = \left\{ \begin{pmatrix} F \\TF \end{pmatrix} : F\in Dom(T)\right\} \subset \mathfrak F_d^2 \oplus \mathfrak F_d^2.
\end{equation}
Recall that by definition the operator $T$ is closed if and only if $G(T)$ is a closed subspace.


We can now prove Theorem~\ref{thm:free-main}.

\begin{proof}[Proof of Theorem~\ref{thm:free-main}]
  Let $H$ be a free holomorphic function and suppose the operator $T:F\to HF$ is densely defined. As above we take $Dom(T)=\{F\in\mathfrak F^2_d:HF\in\mathfrak F^2_d\}$. By Lemma~\ref{lem:free-left-multipliers}, $T$ is closed and commutes with the right free shift, which means that its graph
  \begin{equation}
        G(T) = \left\{ \begin{pmatrix} F \\ TF\end{pmatrix} : F\in Dom(T) \right\} \subset \mathfrak F_d^2 \oplus \mathfrak F_d^2
  \end{equation}
is a closed, invariant subspace for the operators $R_j\oplus R_j$ in $\mathfrak F_d^2 \oplus \mathfrak F_d^2$, for each $j$. 
We first examine the {\em wandering
subspace} for the restriction of $R\oplus R$ to $G(T)$, 
\begin{equation}
  \mathcal W = G(T)\ominus (R\oplus R)G(T).
\end{equation}
(Here $ (R\oplus R)G(T)$ is a shorthand for the (closed) span of the ranges of the operators $R_j\oplus R_j$ restricted to $G(T)$. )

We claim this space $\mathcal W$ is one-dimensional.  To see this, consider the map $Q:G(T)\to \mathfrak F^2_d$ defined by
  \begin{equation*}
    Q(F\oplus TF) = F.
  \end{equation*}
Evidently $Q$ is contractive, and $Q$ has dense range, since its range is precisely $Dom(T)$. Hence $Q^*:\mathfrak F^2_d\to G(T)$ is injective. 
In addition, $Q$ intertwines the action of $R$ in $\mathfrak F_d^2$ and the restriction
of $R\oplus R$ to $G(T)$.  We will show that $\mathcal W$ is spanned by the vector $Q^*\xi_\varnothing$.

First we claim that $Q^*\xi_\varnothing\in \mathcal W$. Observe that $Q^*\xi_\varnothing \neq 0$ since $Q^*$ is injective. Next, for any
$F\oplus TF\in G(T)$ and any nonempty word $\alpha$, 
\begin{align*}
  \langle Q^*\xi_\varnothing , R_\alpha F\oplus
  R_\alpha TF\rangle &= \langle \xi_\varnothing , Q(R_\alpha F\oplus TR_\alpha F)\rangle \\
&= \langle\xi_\varnothing, R_\alpha F\rangle \\
&=0.
\end{align*}
Now suppose $F\oplus TF$ is a vector in $\mathcal W$ which is orthogonal to
$Q^*\xi_\varnothing$, we show $F\oplus TF$ is $0$. By assumption
\begin{align*}
  0 &=\langle F\oplus TF, Q^*\xi_\varnothing\rangle \\
&= \langle F, \xi_\varnothing\rangle.
\end{align*}
So $F\in Dom(T)$ and $F(0)=0$, so by the assumption that $T$ is local,  we have $R_j^*F\in
Dom(T)$ for each $j$. We can then write $F=\sum R_jR_j^*F$, and we have
\begin{align*}
 F\oplus TF &=  \left(\sum_j R_jR_j^*F\right)\oplus T\left(\sum R_jR_j^*F \right)\\
 &= \sum (R_j\oplus R_j)(R_j^*F \oplus TR_j^*F)\\
\end{align*}
But this says $F\oplus TF\in \text{span}_j(R_j\oplus R_j)G(T)=\mathcal W^\bot$.  As we
assumed $F\oplus TF\in \mathcal W$, we conclude $F=0$.

Now, by the Davidson--Pitts version of the Beurling theorem for $\mathcal R_d$ (stated in \cite[Theorem 2.1]{davidson-pitts-1999} for $\mathcal L_d$, but it is evident that the analogous statements hold for $\mathcal R_d$), there is an isometric left multiplier
  $M_\Phi= \begin{pmatrix} M_A \\ M_B\end{pmatrix}$ from $\mathfrak F_d^2$ onto $G(T)$, intertwining the action of $R$ and $R\oplus R$, and taking the wandering vector $\xi_\varnothing$ for $R$ in $\mathfrak F_d^2$ onto the (unit) wandering vector $\frac{Q^*\xi_\varnothing}{\|Q^*\xi_\varnothing\|}$ in $\mathcal W\subset G(T)$, so that 
  \begin{equation}
    G(T) = \mathrm{Ran} (M_\Phi ) = \{ \begin{pmatrix} AF \\ BF\end{pmatrix}
    :F\in\mathcal F^2\}.
  \end{equation}
This pair is the desired $A,B$: we have just seen that the column $\begin{pmatrix} A \\ B\end{pmatrix}$ is isometric. Further,
$A$ is left outer, since this expression just given for $G(T)$ shows that $Dom(T)=\{ AF:F\in\mathfrak F_d^2\}$ so that the latter space is dense. Finally, since the column $\begin{pmatrix} A \\ B\end{pmatrix}$ itself belongs to the graph of $T$, we must have $B=HA$. By Lemma~\ref{lem:outer-invertible}, $A(X)$ is invertible for each $X\in\mathcal B_d$, and hence $H(X)=B(X)A(X)^{-1}$. 

Now let us prove uniqueness. Besides the pair $A,B$ already constructed, we suppose $A_1, B_1$ is another pair satisfying (i)-(iii). Then the column $\Phi_1=\begin{pmatrix} A_1 \\ B_1\end{pmatrix}$ is an isometric left multiplier, whose range is a (necessarily closed) $R\oplus R$-invariant subspace $\mathcal M\subset G(T)$. Yet another application of the Beurling theorem for $R$ shows that there is an isometric left multiplier $\Theta\in\mathcal L_d$ such that $A_1=\Theta A, B_1=\Theta B$. Since $A_1$ has dense range by hypothesis, we conclude that $\text{ran}\Theta$ must be dense in $\mathfrak F^2_d$, but since $\Theta$ is already an isometry, $\Theta$ must in fact be unitary. But the only unitaries which commute with $\mathcal R_d$ are unimodular scalars \cite[Corollary 1.5]{davidson-pitts-1999}, and thus $A,B$ are unique up to a unimodular constant. 

To finish the proof, let us now suppose $H\in \mathfrak F^2_d$. It follows that left multiplier $F\to HF$ is densely defined, since for example $HF$  belongs to $\mathfrak F^2_d$ for every free polynomial $F$. Thus by the first part of the theorem, $H$ is left Smirnov. Moreover, for the pair $A,B$ constructed above, we know that the range of the left free inner function
  \begin{equation}\label{rightAB}
    F\to \begin{pmatrix} AF \\ BF\end{pmatrix}
  \end{equation}
is precisely the graph of the operator of left multiplication by $H$. Since $H\in \mathfrak F^2_d$, the vector $\xi_\varnothing$ (corresponding to the constant free function $I$) belongs to $\mathcal D_H$, and hence
\begin{equation}
  \begin{pmatrix} I \\ H\end{pmatrix}
\end{equation}
belongs to the graph of $T_H$. Thus there is an $F\in \mathfrak F^2_d$ with $AF=I$, and hence $F=A^{-1}$ belongs to $\mathfrak F^2_d$. Finally since the square norm of the column $  \begin{pmatrix} I \\ H\end{pmatrix}$ is $1+\|H\|^2$, and the map (\ref{rightAB}) is isometric, it follows that in fact $\|A^{-1}\|^2 =1+\|H\|^2$.

\end{proof}

\section{$H^2_d$-Smirnov functions}\label{sec:h2d-smirnov}

We recall the connection between the free function spaces $\mathfrak F^2_d$, $\mathfrak F^\infty_d$, and the Drury-Arveson space $H^2_d$ and its multiplier algebra. 

First, given a free holomorphic function $H\in\mathfrak F^2_d$, we may of course evaluate it on a tuple of $1\times 1$ matrices $z=(z_1, z_2, \dots)$ satisfying $\sum_j |z_j|^2<1$, (i.e. a point of the open unit ball $\mathbb B^d$). The resulting holomorphic function $h(z)=H(z)$ belongs to the Drury-Arveson space on $\mathbb B^d$, and in fact this map is a co-isometry. In particular every $h\in H^2_d$ has a {\em free lift} to a free function $H\in \mathfrak F^2_d$, and there is a unique such lift preserving the norm: $\|H\|=\|h\|$. Namely, if $\mathbb{N} ^d$ denotes the additive monoid of $d-$tuples of non-negative integers, and $\mathbf{n} := (n_1 , ... , n_d ) \in \mathbb{N} ^+ _d$, set $z^\mathbf{n} := z_1 ^{n_1} \cdots z_d ^{n_d}$. Since the free semigroup $\mathbb{F} _d ^+$ is the universal monoid on $d$ generators, there is a unital semi-group epimorphism, $\lambda : \mathbb{F} _d ^+ \rightarrow \mathbb{N} _d ^+$, the \emph{letter counting map}, defined by $\lambda (\alpha) = (n _1 , ..., n_d )$, where $n_k$ is the number of times the letter $k$ appears in the word $\alpha$. Every $h \in H^2 _d$ has a Taylor series expansion (about $0$) indexed by $\mathbb{N} ^+ _d$, and the map:
$$ h(z) = \sum _{\mathbf{n} \in \mathbb{N} ^+ _d  } h_{\mathbf{n}} z^\mathbf{n} \  \mapsto \  H(Z) := \sum h_{\mathbf{n}} Z ^{\mathbf{n}}; \quad Z \in \mathcal{B} _d $$ 
defines an isometric embedding of $H^2 _d$ into $\mathfrak{F} ^2 _d$ (\emph{i.e.} $H^2 _d$ is identified with symmetric Fock space), where 
$$ Z ^{\mathbf{n}} := \sum _{\alpha | \ \lambda (\alpha ) = \mathbf{n}} Z^\alpha, $$ see \cite[Section 4]{Sha2013}. 
Likewise, the map $F\to F(z)$ is a completely contractive homomorphism from $\mathfrak F^\infty_d$ onto the multiplier algebra $Mult(H^2_d)$, (see \cite[Theorem 4.4.1, Subsection 4.9]{Sha2013} or \cite[Section 2]{DP-NP}) and again (by commutant lifting) every $f\in Mult H^2_d$ has a norm-preserving free lift to some $F\in \mathfrak F^\infty_d$ \cite{Ball2001-lift,DL2010commutant}. (Unlike the Hilbert space case, however, norm-preserving free lifts from $Mult(H^2_d)$ to $\mathfrak F^\infty_d$ may not be unique \cite[Corollary 7.1]{JMfree}.)

We need one more lemma:

\begin{lemma}\label{lem:free-outer-implies-outer}
  If the free function $A\in\mathfrak F^\infty_d$ is left outer, then the multiplier $a(z)=A(z)$ is outer for $H^2_d$. 
\end{lemma}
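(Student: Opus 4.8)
The plan is to show that left-outerness of $A$ (density of $\{AF : F \in \mathfrak{F}^2_d\}$ in the Fock space) descends through the co-isometric restriction map $\pi : \mathfrak{F}^2_d \to H^2_d$, $\pi H = H(z) = h$, to yield density of $\{a f : f \in H^2_d\}$ in $H^2_d$. The key structural facts I would lean on are already assembled in the excerpt: $\pi$ is a co-isometry (hence surjective with dense, in fact full, range), the evaluation $F \mapsto F(z)$ is a homomorphism intertwining multiplication, and $F \mapsto F(z)$ carries $\mathfrak{F}^\infty_d$ onto $Mult(H^2_d)$ with $A \mapsto a$. The essential point is that $\pi$ is a \emph{homomorphism-compatible} surjection: for $F \in \mathfrak{F}^2_d$ one has $\pi(AF) = a \cdot \pi(F)$, since evaluating the free product $AF$ at a commuting scalar tuple $z$ gives $A(z)F(z) = a(z) f(z)$.

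Concretely, first I would record the intertwining identity $\pi(AF) = a\,\pi(F)$ for all $F \in \mathfrak{F}^2_d$; this is immediate from fact (4) in Section~2 (the Cauchy product formula for coefficients) together with the observation that restricting to a scalar tuple $z \in \mathbb{B}^d$ simply evaluates both free series as ordinary power series, so the noncommutative product collapses to the commutative product of the evaluations. Second, I would apply $\pi$ to the set $A\mathfrak{F}^2_d$: since $A$ is left outer, $A\mathfrak{F}^2_d$ is dense in $\mathfrak{F}^2_d$; because $\pi$ is bounded (indeed a co-isometry, so continuous and surjective), it maps dense sets to dense subsets of its range, and its range is all of $H^2_d$. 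Hence $\pi(A\mathfrak{F}^2_d) = a\,\pi(\mathfrak{F}^2_d) = a\,H^2_d$ is dense in $H^2_d$, which is exactly the statement that $a$ is $H^2_d$-outer.

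The one step requiring care — and the place I expect the only real subtlety — is justifying $\pi(A\mathfrak{F}^2_d) = a\,\pi(\mathfrak{F}^2_d)$ as a set equality and, more importantly, that continuity plus surjectivity of $\pi$ genuinely transports topological density across. Boundedness of $\pi$ gives $\overline{\pi(S)} \supseteq \pi(\overline{S})$ for any set $S$, but I want density of $\pi(S)$ in the target $H^2_d$, not merely in $\overline{\pi(S)}$. This is handled precisely because $\pi$ is surjective with $\pi(\mathfrak{F}^2_d) = H^2_d$: if $A\mathfrak{F}^2_d$ is dense in $\mathfrak{F}^2_d$, then for any target $g = \pi G \in H^2_d$ and any $\varepsilon > 0$ I choose $AF$ with $\|G - AF\| < \varepsilon/\|\pi\|$, whence $\|g - \pi(AF)\| = \|\pi(G - AF)\| < \varepsilon$, and $\pi(AF) = a\,\pi(F) \in a H^2_d$. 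Thus $aH^2_d$ is dense, completing the argument. A secondary bookkeeping point is confirming $a \in Mult(H^2_d)$ in the first place, but this is exactly the assertion (cited from \cite{Sha2013}, \cite{DP-NP}) that $F \mapsto F(z)$ maps $\mathfrak{F}^\infty_d$ into $Mult(H^2_d)$, so no new work is needed there.
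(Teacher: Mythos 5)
Your proof is correct and is essentially the paper's argument: both rest on the facts that the symmetrization (evaluation) co-isometry $\sigma:\mathfrak F^2_d\to H^2_d$ is a continuous surjection and that $\sigma(AF)=a\,\sigma(F)$, so that density of $A\mathfrak F^2_d$ transports to density of $aH^2_d$. The only difference is stylistic: the paper phrases this contrapositively, pulling back a nonzero functional annihilating $aH^2_d$ to one annihilating $A\mathfrak F^2_d$, whereas you push density forward directly with an $\varepsilon$-argument.
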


\begin{proof}
  If $a(z)$ is not outer for $H^2_d$, then there exists a nonzero continuous linear functional $\tau$ on $H^2_d$ which annihilates the range $aH^2_d$.  Since the symmetrization map $\sigma:\mathfrak F_d^2 \to H^2_d$ carries $A\mathfrak F_d^2$ onto $aH^2_d$, it follows that $\tau\circ \sigma$ is a nonzero, continuous linear functional on $\mathfrak F_d^2$ which annihilates $A\mathfrak F_d^2$, so that this space is not dense in $\mathfrak F_d^2$, and $A$ is not outer. 
\end{proof}
(We do not know whether or not, conversely, every outer mulitplier $a$ of $H^2_d$ has a (left or right) {\em outer} free lift.)

\begin{proof}[Proof of Theorem~\ref{thm:commutative-main}]
  Given $h\in H^2_d$ there exists a free lift to a function
  $H\in\mathfrak F^2_d$ with $\|H\| =\|h\|$. Applying Theorem~\ref{thm:free-main}, there is a canonical pair $(A, B)$ such that $H(X)=B(X)A(X)^{-1}$ for all $X$
  in the free unit ball. Passing to commuting arguments we get
  a column contraction $a(z)=A(z), b(z)=B(z)$ so that $h(z) =
  \frac{b(z)}{a(z)}$, and $a$ is outer by Lemma~\ref{lem:free-outer-implies-outer}. Finally, since $A^{-1}$ belongs to $\mathfrak F^2_d$ and the symmetrization map is contractive, we conclude that $1/a\in H^2_d$ and $\|1/a\|^2\leq \|A^{-1}\|^2 = 1+\|H\|^2 =1+\|h\|^2$.
\end{proof}

It is possible to extend Theorem~\ref{thm:free-main} and its commutative counterpart Theorem~\ref{thm:commutative-main} to sequences $\{H_i\}$ or $\{h_i\}$ of functions, representing them as Smirnov functions with a common denominator (compare \cite[Theorem 1.1]{AHMR-2017-2}). We have:
\begin{prop}
  If $\{H_i\}$ is a sequence in $\mathfrak F^2_d$ with $\sum_{i=1}^\infty \|H_i\|^2<\infty$, then there exists a free function $A(X)$ and a sequence $B_i(X)$ of free functions such that:
  \begin{itemize}
  \item[i)] $A$ is left free outer,
\item[ii)] for each $i$, $H_i(X)=B_i(X)A(X)^{-1}$,   
\item[iii)] the column 
    \begin{equation}
      \begin{pmatrix} A \\ B_1 \\ B_2 \\ \vdots \end{pmatrix}
    \end{equation}
is a left isometric multiplier, and
  
\item[iv)]  $A(X)^{-1} \in\mathfrak F^2_d$ and $\|A^{-1}\|_{\mathfrak F^2_d}^2 =1+\sum_{i=1}^\infty \|H_i\|_{{\mathfrak F}^2_d}^2.$
  \end{itemize}
\end{prop}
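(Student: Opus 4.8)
The plan is to adapt the proof of Theorem~\ref{thm:free-main} essentially verbatim, replacing the single multiplication operator by the ``column'' operator built from the whole sequence. Concretely, I would work in the Hilbert space $\mathfrak F^2_d \oplus \left( \bigoplus_{i=1}^\infty \mathfrak F^2_d \right)$ and consider the operator $T$ with domain
\[
  \mathcal D = \left\{ F \in \mathfrak F^2_d : H_i F \in \mathfrak F^2_d \text{ for all } i, \ \sum_{i=1}^\infty \|H_i F\|^2 < \infty \right\},
\]
acting by $TF = (H_1 F, H_2 F, \dots)$ into $\bigoplus_i \mathfrak F^2_d$. The first step is to check that $\mathcal D$ is dense: every free polynomial $F$ is a bounded right multiplier, so if $M_F^{\mathrm R}$ denotes right multiplication by $F$ then $\|H_i F\| = \|M_F^{\mathrm R} H_i\| \le \|F\|_\infty \|H_i\|$, whence $\sum_i \|H_i F\|^2 \le \|F\|_\infty^2 \sum_i \|H_i\|^2 < \infty$ by the hypothesis $\sum_i \|H_i\|^2 < \infty$. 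Thus the polynomials lie in $\mathcal D$.

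Next I would establish the exact analogue of Lemma~\ref{lem:free-left-multipliers}: $T$ is closed, local, and commutes with the right free shift (the shift on the range side being $\bigoplus_i R_j$). Each of these is proved coordinatewise exactly as before—locality from $H_i(R_j^* F) = R_j^*(H_i F)$ together with the contractivity of $R_j^*$ (which preserves the summability condition defining $\mathcal D$), commutation from $H_i(R_j F) = R_j(H_i F)$ and the fact that each $R_j$ is isometric, and closedness from the pointwise convergence of the free power series on $\mathcal B_d$ as in Lemma~\ref{lem:free-left-multipliers}, applied in each coordinate together with completeness of the direct sum. Consequently the graph $G(T)$ is a closed subspace of $\mathfrak F^2_d \oplus (\bigoplus_i \mathfrak F^2_d)$ invariant for $R \oplus (\bigoplus_i R)$.

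The heart of the argument is then the wandering-subspace computation, carried out identically to the proof of Theorem~\ref{thm:free-main}. With $Q : G(T) \to \mathfrak F^2_d$ the contraction $F \oplus TF \mapsto F$, which has dense range $\mathcal D$ (so $Q^*$ is injective), the vector $Q^*\xi_\varnothing$ is a nonzero element of the wandering subspace $\mathcal W$, and locality forces $\mathcal W$ to be one-dimensional: any $F \oplus TF \in \mathcal W$ orthogonal to $Q^*\xi_\varnothing$ has $\widehat F(0) = 0$, so $R_j^* F \in \mathcal D$, whence $F \oplus TF = \sum_j (R_j \oplus R_j)(R_j^* F \oplus T R_j^* F)$ lies in $\mathcal W^\perp$ and must vanish. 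Invoking the Davidson--Pitts Beurling theorem for $\mathcal R_d$ \cite{davidson-pitts-1999} then produces an isometric left multiplier column $\Phi$ with entries $A, B_1, B_2, \dots$ from $\mathfrak F^2_d$ onto $G(T)$ carrying $\xi_\varnothing$ to the normalized wandering vector, giving (iii) and $G(T) = \{(AF, B_1 F, B_2 F, \dots) : F \in \mathfrak F^2_d\}$. Outerness of $A$ (i) follows since $\mathrm{Dom}(T) = A\mathfrak F^2_d$ is dense; as the column itself lies in $G(T)$ we get $B_i = H_i A$, and Lemma~\ref{lem:outer-invertible} gives invertibility of $A(X)$, hence (ii). For (iv), $\sum_i \|H_i\|^2 < \infty$ guarantees $\xi_\varnothing \in \mathcal D$, so $(I, H_1, H_2, \dots) \in G(T)$; thus $AF = I$ for some $F = A^{-1} \in \mathfrak F^2_d$, and the isometry of $\Phi$ yields $\|A^{-1}\|^2 = \|(I, H_1, H_2, \dots)\|^2 = 1 + \sum_i \|H_i\|^2$.

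The one genuinely new point—and the step I would watch most carefully—is that the Beurling theorem is now applied to an invariant subspace of the infinite-multiplicity module $\mathfrak F^2_d \oplus (\bigoplus_i \mathfrak F^2_d)$ rather than of $\mathfrak F^2_d \oplus \mathfrak F^2_d$. This causes no difficulty because the Davidson--Pitts theorem holds for $\mathfrak F^2_d \otimes \mathcal K$ with an arbitrary separable multiplicity space $\mathcal K$, and the one-dimensionality of $\mathcal W$ guarantees that the resulting inner multiplier has a one-dimensional source, i.e. is genuinely a column of scalar free multipliers rather than a matrix. The remaining task is purely bookkeeping: confirming that the summability condition defining $\mathcal D$ is stable under the forward and backward shifts and under limits, which follows from the contractivity and isometry properties of the $R_j$ noted above.
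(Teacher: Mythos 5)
Your proposal is correct and is precisely the paper's own argument: the paper's proof consists of the single remark that one replaces the densely defined left multiplier $F\mapsto HF$ in the proof of Theorem~\ref{thm:free-main} by the column multiplier $F\mapsto (H_1F, H_2F,\dots)$, which is exactly what you carry out, with the details (density of the domain via the summability hypothesis, coordinatewise closedness/locality/shift-commutation, and the infinite-multiplicity Davidson--Pitts theorem) filled in explicitly.
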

\begin{proof}
  The proof is identical to that given in the single function case; one need only replace the densely defined left multiplier $F\to  HF$ by the densely defined left column multiplier
  \begin{equation}
    F\to \begin{pmatrix} H_1F \\ H_2F \\ \vdots \end{pmatrix}. 
  \end{equation}
\end{proof}
As before, the following corollary is immediate, by applying the above proposition to a norm-preserving free lift $\{H_i\}$ of the sequence $\{h_1\}$:
\begin{cor}\label{cor:sequences-h2d}
  If $\{h_i\}$ is a sequence in $H^2_d$ with $\sum_{i=1}^\infty \|h_i\|^2<\infty$, then there exist multipliers $a, b_1, b_2, \dots \in Mult(H^2_d)$ such that:
  \begin{itemize}
  \item[i)] $a$ is outer, 
\item[ii)] For each $i$, $h_i=\frac{b_i}{a}$.   
\item[iii)] the column 
    \begin{equation}
      \begin{pmatrix} a \\ b_1 \\ b_2 \\ \vdots \end{pmatrix}
    \end{equation}
is a contractive multiplier, and
  \item[iv)] $1/a\in H^2_d$ and $\|1/a\|_{H^2_d}^2 \leq 1+\sum_{i=1}^\infty \|h_i\|_{H^2_d}^2$. 
  \end{itemize}
\end{cor}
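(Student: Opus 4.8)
The plan is to obtain this corollary by descending the free, sequence-valued Proposition just established to commuting scalar arguments, in exact parallel with the way Theorem~\ref{thm:commutative-main} was deduced from the single-function case of Theorem~\ref{thm:free-main}. The only structural change is that the single numerator/denominator pair is replaced by the common denominator $A$ together with the sequence $\{B_i\}$ produced by the Proposition, and that the norm bookkeeping now involves the sum $\sum_i \|h_i\|^2$.

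First I would lift. Given $\{h_i\}$ in $H^2_d$ with $\sum_i \|h_i\|^2<\infty$, I choose for each $i$ a norm-preserving free lift $H_i\in\mathfrak{F}^2_d$, so that $\|H_i\|_{\mathfrak{F}^2_d}=\|h_i\|_{H^2_d}$; such lifts exist by the isometric identification of $H^2_d$ with the symmetric Fock space recalled at the start of this section. Consequently $\sum_i\|H_i\|^2=\sum_i\|h_i\|^2<\infty$, so $\{H_i\}$ satisfies the hypothesis of the preceding Proposition. Applying that Proposition yields free left multipliers $A,B_1,B_2,\dots\in\mathfrak{F}^\infty_d$ with $A$ left outer, $H_i(X)=B_i(X)A(X)^{-1}$, the free column $\begin{pmatrix} A \\ B_1 \\ B_2 \\ \vdots\end{pmatrix}$ an isometric left multiplier, and $A^{-1}\in\mathfrak{F}^2_d$ with $\|A^{-1}\|^2=1+\sum_i\|H_i\|^2$.

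Next I would pass to commuting arguments. Setting $a(z):=A(z)$ and $b_i(z):=B_i(z)$ for $z\in\mathbb{B}^d$ uses the completely contractive homomorphism $\mathfrak{F}^\infty_d\to Mult(H^2_d)$, $F\mapsto F(z)$, so that $a,b_i\in Mult(H^2_d)$. Property (i) then follows from Lemma~\ref{lem:free-outer-implies-outer}, since the free outer function $A$ descends to an outer multiplier $a$. For (ii), Lemma~\ref{lem:outer-invertible} gives that $A(z)$ is invertible at every $z\in\mathbb{B}^d$; evaluating $H_i=B_iA^{-1}$ pointwise then gives $h_i(z)=H_i(z)=B_i(z)A(z)^{-1}=b_i(z)/a(z)$. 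Property (iii) is immediate from complete contractivity, which carries the isometric free column to a contractive column multiplier $\begin{pmatrix} a \\ b_1 \\ b_2 \\ \vdots\end{pmatrix}$ of $H^2_d$.

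The crux is (iv), and this is the only step where I expect genuine (if mild) subtlety. I would argue that the symmetrization map $\sigma:\mathfrak{F}^2_d\to H^2_d$, realized as evaluation $F\mapsto F(z)$, sends the free function $A^{-1}$ to $1/a$: since $A(z)$ is invertible one has $\sigma(A^{-1})(z)=A^{-1}(z)=A(z)^{-1}=1/a(z)$. The content here is precisely that the free inverse $A^{-1}$ — which is a bona fide element of $\mathfrak{F}^2_d$ by the Proposition — symmetrizes to the reciprocal of the symmetrized denominator rather than to some unrelated Fock-space vector. Granting this identification, contractivity of $\sigma$ gives $1/a=\sigma(A^{-1})\in H^2_d$ and
\[
\left\|\frac{1}{a}\right\|^2_{H^2_d}\leq \|A^{-1}\|^2_{\mathfrak{F}^2_d}=1+\sum_i\|H_i\|^2=1+\sum_i\|h_i\|^2,
\]
which is exactly (iv). This completes the descent and proves the corollary.
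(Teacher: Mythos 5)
Your proposal is correct and is exactly the paper's argument: the paper disposes of this corollary in one line (``immediate, by applying the above proposition to a norm-preserving free lift $\{H_i\}$''), implicitly invoking the same descent used to derive Theorem~\ref{thm:commutative-main} from Theorem~\ref{thm:free-main}, which is precisely what you carried out. Your explicit justification of the step $\sigma(A^{-1})=1/a$ via pointwise invertibility of $A(z)$ (Lemma~\ref{lem:outer-invertible}) fills in a detail the paper leaves tacit, but it is the intended reasoning, not a different route.
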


Using the fact that $H^2_d$ is the ``universal model'' for CNP spaces, we obtain:
\begin{cor}\label{cor:sequences-cnp}
  The statement of Corollary~\ref{cor:sequences-h2d} holds with $\mathcal H(k)$ in place of $H^2_d$, for any normalized CNP kernel $k$. 
\end{cor}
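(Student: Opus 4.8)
The plan is to deduce Corollary~\ref{cor:sequences-cnp} from Corollary~\ref{cor:sequences-h2d} in exactly the way the latter was deduced from the Proposition: by lifting to the universal model $H^2_d$, applying the result there, and then pushing the conclusion back down. First I would invoke the Agler--McCarthy description: writing $k(x,y) = (1-u(x)u(y)^*)^{-1}$ with $E = u(\Omega)$, the space $\mathcal H(k)$ is isometrically isomorphic to $\mathcal H_E \subset H^2_d$ of \eqref{HE}, and this isomorphism also identifies $Mult(\mathcal H(k))$ with $Mult(\mathcal H_E)$. Under this identification the sequence $\{h_i\}$ becomes a sequence $\{g_i\} \subset \mathcal H_E$ with $\|g_i\|_{\mathcal H_E} = \|h_i\|_{\mathcal H(k)}$, so that $\sum_i \|g_i\|^2 < \infty$. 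Using the fact (recorded after \eqref{HE}) that functions in $\mathcal H_E$ extend norm-preservingly to $H^2_d$, I would choose $\tilde g_i \in H^2_d$ with $\tilde g_i|_E = g_i$ and $\|\tilde g_i\|_{H^2_d} = \|g_i\|_{\mathcal H_E}$, so that $\sum_i \|\tilde g_i\|^2$ equals $\sum_i \|h_i\|^2$.

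Next I would apply Corollary~\ref{cor:sequences-h2d} to $\{\tilde g_i\}$, obtaining $a, b_1, b_2, \dots \in Mult(H^2_d)$ with the four listed properties, and then restrict, setting $a_E = a|_E$ and $(b_i)_E = b_i|_E$, which lie in $Mult(\mathcal H_E)$ because restriction is a contraction of multiplier algebras. Properties (ii), (iii), (iv) should then follow mechanically. Since $1/a \in H^2_d$ is holomorphic, $a$ is non-vanishing on all of $\mathbb B^d$, so $a_E$ is non-vanishing on $E$ and the pointwise identity $\tilde g_i = b_i/a$ restricts to $g_i = (b_i)_E / a_E$, giving (ii). For (iv) I would use that the restriction (co-isometry) $f \mapsto f|_E$ is contractive from $H^2_d$ onto $\mathcal H_E$, so $(1/a)|_E = 1/a_E \in \mathcal H_E$ with $\|1/a_E\|_{\mathcal H_E}^2 \le \|1/a\|_{H^2_d}^2 \le 1 + \sum_i \|h_i\|^2$. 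For (iii) the key point is that the restriction map is a \emph{complete} contraction on the multiplier algebra; applying it to the contractive column $(a, b_1, b_2, \dots)^{\mathrm T}$ (an element of the relevant matrix multiplier space) yields a contractive column multiplier of $\mathcal H_E$.

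The only property requiring a genuine argument is (i), that $a_E$ is $\mathcal H_E$-outer. Here I would exploit that the restriction map $\rho: f \mapsto f|_E$ is bounded and surjective and intertwines multiplication, $\rho(a f) = a_E \, \rho(f)$, so that $\rho(a H^2_d) = a_E \mathcal H_E$. Because $a$ is $H^2_d$-outer, $a H^2_d$ is dense, and a bounded surjection carries dense sets to dense sets; hence $a_E \mathcal H_E$ is dense in $\mathcal H_E$, i.e. $a_E$ is outer. Transporting everything back through the isometric isomorphism $\mathcal H_E \cong \mathcal H(k)$ then yields the desired multipliers $a, b_1, b_2, \dots$ of $\mathcal H(k)$. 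I expect this outerness step to be the main (and only mild) obstacle, since the remaining items are immediate from the stated co-isometric and completely contractive properties of the restriction map; the subtle role of \emph{complete} (rather than mere) contractivity in (iii) is worth flagging, but it is already built into the cited facts.
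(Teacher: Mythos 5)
Your proposal is correct, and its overall skeleton (extend the sequence isometrically to $H^2_d$, invoke Corollary~\ref{cor:sequences-h2d}, restrict the resulting multipliers back to $E$, and note that only outerness of the restriction needs a real argument) is exactly the paper's. The one place where you genuinely diverge is the outerness step, which the paper itself flags as ``the only thing that remains to be checked.'' You argue directly: since the restriction map $\rho\colon f\mapsto f|_E$ is a continuous surjection of $H^2_d$ onto $\mathcal H_E$ intertwining multiplication, $\rho(aH^2_d)\subseteq a_E\mathcal H_E$, and a continuous map with dense range carries the dense set $aH^2_d$ to a dense subset of $\mathcal H_E$; hence $a_E\mathcal H_E$ is dense. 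This is valid (the needed inclusion $\rho(\overline{D})\subseteq\overline{\rho(D)}$ is just continuity), and it is in fact the same density-transport mechanism the paper uses to prove Lemma~\ref{lem:free-outer-implies-outer}, transplanted from the symmetrization map to the restriction map. The paper instead proves the contrapositive by duality: if $a|_E$ is not outer, pick a nonzero $f\in\mathcal H_E$ with $M_{a|_E}^*f=0$, and use that $\mathcal H_E$ is spanned by kernel functions --- hence co-invariant for every $M_b^*$, $b\in Mult(H^2_d)$ --- to conclude $M_a^*f=0$ in $H^2_d$, so $a$ is not outer there. Your argument is slightly more elementary and self-contained, needing only that $\rho$ is a multiplicative continuous surjection; the paper's argument exploits the co-invariance structure of $\mathcal H_E$ and makes transparent that non-outerness of the restriction obstructs outerness of \emph{every} multiplier extension, not just the particular one produced by the construction. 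Either route completes the proof.
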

\begin{proof}
 Given the sequence $(h_i)\subset \mathcal H_E$, (recall Equation (\ref{HE}) for the definition of $\mathcal{H} _E$), we extend isometrically to $H^2_d$, invoke Corollary~\ref{cor:sequences-h2d}, and restrict the multipliers $a, b_i$ we obtain back to $E$.  In light of the foregoing discussion, the only thing that remains to be checked is that if $a$ is outer in $Mult(H^2_d)$, then its restriction to $E$ is outer in $Mult(\mathcal H_E)$. We prove the contrapositive. Let $\varphi=a|_E$.  If $M_\varphi$ is not outer for $Mult(\mathcal H_E)$, that is, if $\mathrm{Ran} (M_\varphi)$ is not dense in $\mathcal H_E$, then $M_\varphi^*$ is not injective, i.e. there exists a nonzero $f\in\mathcal H_E$ with $M_\varphi^* f=0$. By the definition of $\mathcal H_E$, it is invariant for adjoints of multipliers $M_b^*$ on $H^2_d$ (indeed, $\mathcal H_E$ is a span of kernel functions in $H^2_d$, which are eigenfunctions for all the $M_b^*$). Therefore if $a$ is any extension of $\varphi$ to $Mult(H^2_d)$, it follows that also $M_a^*f=0$ in $H^2_d$, so $M_a^*$ is not injective and $a$ is not outer in $Mult(H^2_d)$. 
\end{proof}
Perhaps unfortunately, in general starting with $h\in H^2_d$,
the pair $(a,b)$ obtained from Theorem~\ref{thm:commutative-main} will depend on the choice
of free lift $H$, as the following examples show. 

\subsection{Examples.} Consider the function $h(z_1, z_2)= z_1+z_1z_2$
in $H_2^2$. We will compute the representations $h=b/a$ provided by
Theorem~\ref{thm:commutative-main}, arising from three different choices of free lift
\begin{equation}
  X_1+X_2X_1, \quad X_1+X_1X_2, \quad X_1+\frac12 (X_1X_2+X_2X_1).
\end{equation}
(The third is the norm-preserving lift.) In each case we can employ the following general strategy. When $H$ is
in fact a bounded multiplier itself, if $A,B$ is an inner-outer pair
representing $H$ then from the inner property and the fact that $B=HA$
we obtain
\begin{equation}
 I =  M_A^*M_A +M_B^*M_B  =M_A^*M_A+M_{HA}^*M_{HA} = M_A^*M_A +M_A^*M_H^*M_HM_A
\end{equation}
Thus, conversely, if we can find a bounded, invertible multiplier
$C$ with the property that 
\begin{equation}
  \label{eq:2}
  M_C^*M_C = I+M_H^*M_H,
\end{equation}
then putting $A=C^{-1}$ and $B=HA$ gives an inner-outer pair
representing $H$, which is of course unique (up to a unimodular
constant). In fact, (still assuming $H$ is a bounded multiplier), such
a $C$ will always exist, by a result of Popescu \cite[Corollary 1.4]{popescu-2006-entropy}. In these
examples we will be able to compute it explicitly. 

For $H(X) = X_1+X_2X_1$ we can first observe that left multiplication
by the words $1$ and $21$ respectively are isometries with orthogonal
ranges, so $M_H$ is itelf twice an isometry, so $I+M_H^*M_H =3I$. Thus
$A=3^{-1/2}$ is constant, so we get
\begin{equation}
  a(z) = 3^{-1/2},\quad b(z) = 3^{-1/2} (z_1+z_1z_2).   
\end{equation}
For $H(X)=X_1+X_1X_2 =X_1(I+X_2)$ we have
\begin{equation}
  I+M_H^*M_H =   I +M_{I+X_2}^*M_{X_1}^*M_{X_1} M_{1+X_2} =
  3+M_{X_2}^* +M_{X_2}.
\end{equation}
By the Fejer-Riesz factorization theorem, there exist constants $c_0,
c_1$ such that for all $|z|=1$
\begin{equation}
  3+z+\overline{z} = |c_0+c_1z|^2:=|c(z)|^2.
\end{equation}
(In fact one may verify that $c_0=\frac12 (\sqrt{5}+1)$ and
$c_1=\frac12 (\sqrt{5}-1)$ work.) So by the reasoning above we obtain
\begin{equation}
  a(z) = (c_0 +c_1z_2)^{-1}, \quad b(z) = z_1 (1+z_2)(c_0+c_1z_2)^{-1}.  
\end{equation}

By similar arguments we obtain for the lift $H(X)=X_1+\frac12
(X_1X_2+X_2X_1)$
\begin{equation}
  a(z) =  (d_0+d_1z_2)^{-1}, \quad b(z) =   z_1 (1+z_2)(d_0+d_1z_2)^{-1}. 
\end{equation}
with $d_0=\frac{1}{2\sqrt{2}} (\sqrt{7}+\sqrt{3})$, $d_1= \frac{1}{2\sqrt{2}} (\sqrt{7}-\sqrt{3})$.

\bibliographystyle{plain} 
\bibliography{WP} 

\end{document}